\newtheorem{theorem}{Theorem}
\newtheorem{lemma}[theorem]{Lemma}
\newtheorem{corollary}[theorem]{Corollary}
\newtheorem{proposition}[theorem]{Proposition}
\newtheorem{remark}[theorem]{Remark}
\newcommand*{\id}{{\normalfont\hbox{1\kern-0.15em \vrule width .8pt depth-.5pt}}}
\newcommand{\Tau}{\mathcal{T}}
\numberwithin{equation}{section}
\theoremstyle{definition}
\newcommand{\norm}[1]{\ensuremath{\left\|#1\right\|}}
\newcommand{\pa}[1]{\ensuremath{\left(#1\right)}}
\title{reaction periodic}
\author{Agustin Besteiro}
\date{June 2018}
\begin{document}
 \title{Existence of Peregrine Solitons in fractional reaction-diffusion equations\thanks{Received date, and accepted date (The correct dates will be entered by the editor).}}


          \author{Agust\'in Besteiro\thanks{Instituto de Matem\'atica Luis Santal\'o, CONICET--UBA, , Ciudad Universitaria, Pabell\'on I (1428) Buenos Aires, Argentina, (abesteiro@dm.uba.ar).}
          \and Diego Rial \thanks{Dpto. de Matem\'atica, FCEyN--UBA, Instituto de Matem\'atica Luis Santal\'o, CONICET--UBA,
          	Universidad de Buenos Aires, Ciudad Universitaria, Pabell\'on I (1428) Buenos Aires, Argentina, (drial@dm.uba.ar).}}

         \pagestyle{myheadings} \markboth{Existence of Peregrine Solitons in fractional reaction-diffusion equations}{A. Besteiro and D. Rial} \maketitle

          \begin{abstract}
       In this article, we will analyze the existence of Peregrine type solutions for the fractional diffusion reaction equation by applying Splitting-type methods. These functions that have two main characteristics, they are direct sum of functions of periodic type and functions that tend to zero at infinity. Global existence results are obtained for each particular characteristic, for then finally combining both results.
       
        \end{abstract}
          
          \providecommand{\keywords}[1]{\textbf{\textit{Keywords---}} #1}
          
\begin{keywords}  Fractional diffusion, global existence, Lie--Trotter method.	
\end{keywords}

\providecommand{\AMS}[1]{\textbf{\textit{AMS code---}} #1}
 \begin{AMS} 35K55; 35K57; 35R11; 35Q92; 92D25
\end{AMS}
\section{Introduction}

We consider the non autonomous system
\begin{equation}
\label{eq: reaction-diffusion}
\partial_{t}u + \sigma(-\Delta)^{\beta} u = F(t,u),
\end{equation}
where $u(x,t)\in Z$  for $x\in\mathbb{R}^{n}$, $t>0$, $\sigma \ge 0$ and $0 < \beta \le 1$, $F:\mathbb{R} \times Z \to Z$ a continuous map and  $Z$ a Banach space.
We consider the initial problem $u(x,0)=u_{0}(x)$.

The aim of this paper is to prove the existence of Peregrine type of solutions for the fractional reaction diffusion equation, using recent numerical splitting techniques (\cite{Borgna2015}, \cite{DeLeo2015}, and \cite{Besteiro2018}) introduced for other purposes. 
Peregrine solitons were studied in (\cite{Peregrine1983}), and has multiple applications (See for example, \cite{Bailung2011}, \cite{Chabchoub2011}, \cite{Kibler2010}, \cite{Hammani2011} and \cite{Shrira2009}). "Peregrine solitons" are functions with two main characteristics: These are direct sum of periodic functions and functions that tend to zero when the spatial variable tends to infinity.

Fractional reaction-diffusion equations are frequently used on many different topics of applied mathematics such as biological models, population dynamics models, 
nuclear reactor models, just to name a few (see \cite{Baeumer2007}, \cite{bueno2014}, \cite{burrage2012} and references therein). 

The fractional model captures the faster spreading rates and power law invasion profiles observed in many applications compared to the classical model ($\beta=1$).
The main reason for this behavior is given by the fractional Laplacian, that is described by standard theories of fractional calculus (for a complete survey see \cite{Machado2011}). There are many different equivalent definitions 
of the fractional Laplacian and its behavior is well understood (see \cite{Bucur2016}, \cite{DiNezza2012}, \cite{Kwasnicki2017}, 
\cite{Lischke2018}, \cite{Silvestre2005}, \cite{Pozrikidis2016} and \cite{Landkof1972}). 

The non-autonomous nonlinear reaction diffusion equation dynamics were studied by \cite{Robinson2007} and others, 
analyzing the stability and evolution of the problem.

The paper is organized as follows: In Section 2 we set notations and preliminary results and in Section 3 we present main results, focusing first on each characteristic of the direct sum separately, for finally joining both results to reach the existence of Peregrine Solitons. 


\section{Notations and Preliminaries.}
 We are interested in continuous functions to vectorial values, that is to say, whose evaluations take values in Banach Spaces.

Let $Z$ be a Banach space, we define $C_{\rm u}(\mathbb{R}^{d},Z)$ as
the set of uniformly continuous and bounded functions on $\mathbb{R}^{d}$ with values in $Z$. Taking the norm
\begin{equation*}
\|u\|_{\infty,Z} = \sup_{x\in\mathbb{R}^{d}}|u(x)|_{Z},
\end{equation*}
$C_{\rm u}(\mathbb{R}^{d},Z)$ is a Banach space. 
It is easy to see that if $g \in L^1(\mathbb{R}^d)$ and $u \in C_u(\mathbb{R}^d,Z)$ the Bochner integral is defined in the following way,
\begin{align*}
\pa{g*u}(x) = \int_{\mathbb{R}^{d}} g(y)u(x-y) dy
\end{align*}
This defines an element of $C_{\rm u}(\mathbb{R}^{d},Z)$ and the linear operator $u\mapsto g*u$ is continuous (see \cite{Cazenave1998}).

The following results show that the operator $-(-\Delta)^{\beta}$ defines a continuous contraction semigroup 
in the Banach space $C_{\rm u}(\mathbb{R}^{d},Z)$.
The following lemma is a consequence of L\'evy--Khintchine formula for infinitely divisible distributions and 
the properties of the Fourier transform.

\begin{lemma}
Let $0<\beta \le 1$ and $g_{\beta}\in C_{0}(\mathbb{R}^{d})$ such that $\hat{g}_{\beta}(\xi) = e^{-|\xi|^{2\beta}}$,
it holds $g_{\beta}$ is positive, invariant under rotations of $\mathbb{R}^{d}$, integrable and
\begin{align*}
\int_{\mathbb{R}^{d}}g_{\beta}(x)dx = 1.
\end{align*}
\end{lemma}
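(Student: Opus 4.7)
The plan is to construct $g_\beta$ by Fourier inversion from the prescribed $\hat g_\beta$, read off regularity and rotation invariance directly from that construction, and invoke the Lévy–Khintchine representation to secure positivity and the unit-mass normalization.

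First, since $\hat g_\beta(\xi)=e^{-|\xi|^{2\beta}}$ lies in $L^{1}(\mathbb{R}^{d})\cap L^{\infty}(\mathbb{R}^{d})$ — the integrability reduces, after passing to polar coordinates, to $\int_{0}^{\infty} e^{-r^{2\beta}} r^{d-1}\,dr<\infty$, which holds for every $\beta>0$ — the Fourier inversion formula produces a bounded continuous representative
\begin{equation*}
g_\beta(x)=(2\pi)^{-d}\int_{\mathbb{R}^{d}} e^{ix\cdot\xi}\,e^{-|\xi|^{2\beta}}\,d\xi,
\end{equation*}
and the Riemann–Lebesgue lemma places it in $C_{0}(\mathbb{R}^{d})$. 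Rotation invariance of $g_\beta$ is then immediate: for every $R\in O(d)$ the substitution $\xi\mapsto R\xi$ together with $|R\xi|=|\xi|$ gives $g_\beta(Rx)=g_\beta(x)$.

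The heart of the argument is positivity, and this is where the Lévy–Khintchine formula enters. For $0<\beta\le 1$ the function $\psi(\xi)=|\xi|^{2\beta}$ admits a Lévy–Khintchine representation — the pure Gaussian form for $\beta=1$, and a pure-jump form with symmetric stable Lévy measure proportional to $|y|^{-d-2\beta}\,dy$ for $0<\beta<1$. Equivalently, $\psi$ is continuous and negative-definite in the Schoenberg sense, so by the classical Bochner–Schoenberg correspondence $\xi\mapsto e^{-\psi(\xi)}$ is the characteristic function of a probability measure $\mu$ on $\mathbb{R}^{d}$. Since $\hat\mu\in L^{1}$, $\mu$ possesses a continuous density, and by uniqueness of the Fourier transform that density coincides with the representative $g_\beta$ constructed above. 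Therefore $g_\beta\ge 0$ pointwise.

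Once positivity is in hand, integrability and unit mass follow at once: recognizing $g_\beta$ as the density of $\mu$ yields $\int_{\mathbb{R}^{d}} g_\beta(x)\,dx=\hat g_\beta(0)=1$, which simultaneously furnishes $g_\beta\in L^{1}(\mathbb{R}^{d})$ and the stated normalization. I expect the positivity step to be the genuine obstacle: the construction via inversion, the rotation symmetry, and the $C_{0}$ regularity are essentially automatic, but the fact that $e^{-|\xi|^{2\beta}}$ is a bona fide characteristic function — and hence that its inverse Fourier transform is a non-negative measure — cannot be extracted from the inversion formula by elementary means and requires precisely the Lévy–Khintchine/Bochner machinery invoked in the statement of the lemma.
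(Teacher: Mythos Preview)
Your proposal is correct and follows essentially the same route as the paper: the paper's proof is a two-line sketch that invokes Theorem~14.14 of Sato's \emph{L\'evy Processes and Infinitely Divisible Distributions} for positivity (precisely the L\'evy--Khintchine/stable-law machinery you spell out) and declares the remaining claims immediate from the definition of $\hat g_\beta$. Your argument simply unpacks those citations --- Fourier inversion and Riemann--Lebesgue for $C_0$, the obvious change of variable for rotation invariance, and the probability-density identification for integrability and unit mass --- so there is no substantive divergence, only a difference in level of detail.
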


\begin{proof}
The first statement follows from Theorem 14.14 of \cite{Sato1999},
the remaining claims are immediate from the definition of $\hat{g}_{\beta}$.
\end{proof}

Based on the previous lemma, we study Green's function associated to the linear operator $\partial_{t} + \sigma(-\Delta)^{\beta}$.

\begin{proposition}
Let $\sigma > 0$ and $0<\beta \le 1$, the function $G_{\sigma,\beta}$ given by 
\begin{align*}
G_{\sigma,\beta}(t,x) = (\sigma t)^{-\frac{n}{2\beta}} g_{\beta}((\sigma t)^{-\frac{1}{2\beta}} x),
\end{align*}
verifies
\begin{enumerate}[i.]
\item $G_{\sigma,\beta}(.,t)>0$;
\item $G_{\sigma,\beta}(.,t)\in L^{1}(\mathbb{R}^{d})$ and 
\begin{align*}
\int_{\mathbb{R}^{d}}G_{\sigma,\beta}(t,x)dx = 1;
\end{align*}
\item \label{it: semi} $G_{\sigma,\beta}(\cdot,t)*G_{\sigma,\beta}(\cdot,t') = G_{\sigma,\beta}(\cdot,t+t')$, for $t,t'>0$;
\item $\partial_{t}G_{\sigma,\beta} + \sigma (-\Delta)^{\beta} G_{\sigma,\beta} = 0$ for $t>0$.
\end{enumerate}
\end{proposition}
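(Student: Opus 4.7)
The plan is to verify each of the four items by pulling everything back to either a change of variables in $\mathbb{R}^{d}$ or to the Fourier side, using the characterization $\hat{g}_{\beta}(\xi)=e^{-|\xi|^{2\beta}}$ from the previous lemma.

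First I would dispatch items (i) and (ii) by direct computation. Positivity of $G_{\sigma,\beta}(\cdot,t)$ is immediate from positivity of $g_\beta$, since $(\sigma t)^{-n/(2\beta)}>0$ for $t>0$. For integrability and unit mass, the substitution $y=(\sigma t)^{-1/(2\beta)}x$ has Jacobian $(\sigma t)^{n/(2\beta)}$, and produces
\begin{equation*}
\int_{\mathbb{R}^{d}}G_{\sigma,\beta}(t,x)\,dx
=\int_{\mathbb{R}^{d}}g_{\beta}(y)\,dy = 1,
\end{equation*}
which also shows $G_{\sigma,\beta}(\cdot,t)\in L^{1}(\mathbb{R}^{d})$.

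For (iii) and (iv) the cleanest route is the Fourier transform. Using the scaling rule and the definition of $G_{\sigma,\beta}$ one obtains
\begin{equation*}
\widehat{G_{\sigma,\beta}(\cdot,t)}(\xi)=\hat{g}_{\beta}\bigl((\sigma t)^{1/(2\beta)}\xi\bigr)
=\exp\bigl(-\sigma t\,|\xi|^{2\beta}\bigr).
\end{equation*}
The semigroup identity in (iii) then reduces to the pointwise identity $e^{-\sigma t|\xi|^{2\beta}}e^{-\sigma t'|\xi|^{2\beta}}=e^{-\sigma(t+t')|\xi|^{2\beta}}$, since convolution transforms to pointwise multiplication and the Fourier transform is injective on $L^{1}$. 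For (iv), differentiating in $t$ on the Fourier side gives $\partial_{t}\widehat{G_{\sigma,\beta}}(t,\xi)=-\sigma|\xi|^{2\beta}\widehat{G_{\sigma,\beta}}(t,\xi)$, and by the standard definition of the fractional Laplacian via its Fourier symbol $|\xi|^{2\beta}$, this is exactly the Fourier transform of $-\sigma(-\Delta)^{\beta}G_{\sigma,\beta}(t,\cdot)$; inverting yields the PDE.

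The only step requiring any care is (iv), where I must justify that $(-\Delta)^{\beta}$ can be applied pointwise to $G_{\sigma,\beta}(t,\cdot)$ for $t>0$ and that the identity on symbols transfers back. This is not a serious obstacle since $G_{\sigma,\beta}(t,\cdot)$ is a Schwartz-like function (its Fourier transform is a rapidly decaying smooth function for $t>0$, hence $G_{\sigma,\beta}(t,\cdot)\in\mathcal{S}(\mathbb{R}^{d})$), so Fourier inversion and interchanging $\partial_{t}$ with the inverse transform are both valid, and the fractional Laplacian acts as its symbol by definition. No other subtlety arises.
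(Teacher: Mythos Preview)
Your approach matches the paper's proof, which simply says items (i)--(ii) follow from the definition of $\hat g_\beta$ (i.e.\ the previous lemma) and items (iii)--(iv) are immediate from the Fourier transform; you have just supplied the details the paper omits. One small inaccuracy in your final paragraph: for $0<\beta<1$ the function $\xi\mapsto e^{-\sigma t|\xi|^{2\beta}}$ is not smooth at $\xi=0$, so $G_{\sigma,\beta}(t,\cdot)$ is \emph{not} in $\mathcal{S}(\mathbb{R}^{d})$ in general; nonetheless $\widehat{G_{\sigma,\beta}(\cdot,t)}\in L^{1}\cap L^{\infty}$ and is $C^{1}$ in $t$ with $|\xi|^{2\beta}\widehat{G}\in L^{1}$, which is all you need to justify the Fourier inversion and the symbol computation in (iv).
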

\begin{proof}
The first and second statements are a consequence of the definition of $\hat{g}_{\beta}$.
The third and fourth statements are immediate applying Fourier transform. 
\end{proof}

In the following proposition, we have that the linear operator $-\sigma(-\Delta)^{\beta}$ defines a contraction continuous semigroup in the set $C_{\rm u}(\mathbb{R}^{d},Z)$.

\begin{proposition}
For any $\sigma > 0$ and $0<\beta \le 1$, the map ${\sf S}:\mathbb{R}_{+}\to \mathcal{B}(C_{\rm u}(\mathbb{R}^{d},Z))$ defined by
${\sf S}(t)u = G_{\sigma,\beta}(.,t)*u$ is a continuous contraction semigroup.
\end{proposition}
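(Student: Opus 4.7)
My plan is to verify the three defining properties of a strongly continuous contraction semigroup: the algebraic semigroup law, the contraction bound, and strong continuity at $t=0^{+}$ (from which continuity at any $t>0$ follows by the semigroup property and the contraction bound).

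\textbf{Semigroup property.} Since for $u\in C_{\rm u}(\mathbb{R}^{d},Z)$ the map $u\mapsto g*u$ is well defined and continuous whenever $g\in L^{1}(\mathbb{R}^{d})$, convolution is associative in this setting by Fubini's theorem. Consequently, using part \ref{it: semi} of the preceding proposition,
\begin{equation*}
{\sf S}(t){\sf S}(t')u = G_{\sigma,\beta}(\cdot,t)*\bigl(G_{\sigma,\beta}(\cdot,t')*u\bigr) = \bigl(G_{\sigma,\beta}(\cdot,t)*G_{\sigma,\beta}(\cdot,t')\bigr)*u = {\sf S}(t+t')u,
\end{equation*}
and ${\sf S}(0)=\mathrm{Id}$ is understood as the limit (established below). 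Contraction follows immediately from positivity and unit mass of $G_{\sigma,\beta}(\cdot,t)$: for any $x\in\mathbb{R}^{d}$,
\begin{equation*}
\bigl|({\sf S}(t)u)(x)\bigr|_{Z}\le \int_{\mathbb{R}^{d}}G_{\sigma,\beta}(t,y)\,|u(x-y)|_{Z}\,dy\le \|u\|_{\infty,Z},
\end{equation*}
so $\|{\sf S}(t)\|_{\mathcal{B}(C_{\rm u})}\le 1$.

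\textbf{Strong continuity at $t=0^{+}$.} This is the only substantive step. Fix $u\in C_{\rm u}(\mathbb{R}^{d},Z)$ and write
\begin{equation*}
({\sf S}(t)u)(x) - u(x) = \int_{\mathbb{R}^{d}} G_{\sigma,\beta}(t,y)\bigl(u(x-y)-u(x)\bigr)\,dy.
\end{equation*}
The change of variables $y=(\sigma t)^{1/(2\beta)}z$ together with the explicit scaling $G_{\sigma,\beta}(t,x)=(\sigma t)^{-n/(2\beta)}g_{\beta}((\sigma t)^{-1/(2\beta)}x)$ recasts this as
\begin{equation*}
({\sf S}(t)u)(x)-u(x)=\int_{\mathbb{R}^{d}}g_{\beta}(z)\bigl(u(x-(\sigma t)^{1/(2\beta)}z)-u(x)\bigr)dz.
\end{equation*}
Given $\varepsilon>0$, uniform continuity of $u$ yields $\delta>0$ such that $|u(x-h)-u(x)|_{Z}<\varepsilon$ for every $x$ whenever $|h|<\delta$; also, since $g_{\beta}\in L^{1}$, we may choose $R$ with $\int_{|z|\ge R}g_{\beta}(z)\,dz<\varepsilon/(2\|u\|_{\infty,Z}+1)$. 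For $t$ small enough that $(\sigma t)^{1/(2\beta)}R<\delta$, splitting the integral into $\{|z|<R\}$ and $\{|z|\ge R\}$ gives the uniform bound
\begin{equation*}
\|{\sf S}(t)u-u\|_{\infty,Z}\le \varepsilon\int_{|z|<R}g_{\beta}(z)\,dz+2\|u\|_{\infty,Z}\int_{|z|\ge R}g_{\beta}(z)\,dz\lesssim \varepsilon.
\end{equation*}

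\textbf{Continuity at $t_{0}>0$.} For $t>t_{0}$, write ${\sf S}(t)u-{\sf S}(t_{0})u={\sf S}(t_{0})({\sf S}(t-t_{0})u-u)$ and use the contraction bound together with the limit just established; for $t<t_{0}$ the analogous factorization $ {\sf S}(t_{0})u-{\sf S}(t)u={\sf S}(t)({\sf S}(t_{0}-t)u-u)$ works identically. This completes the verification that ${\sf S}$ is a continuous contraction semigroup on $C_{\rm u}(\mathbb{R}^{d},Z)$. The main obstacle is the approximation-of-identity argument above; everything else is formal consequence of the properties of $G_{\sigma,\beta}$ already proved.
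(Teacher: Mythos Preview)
Your argument is correct: you verify the semigroup law via associativity of convolution and property~\ref{it: semi}, the contraction bound from positivity and unit mass of $G_{\sigma,\beta}$, and strong continuity via the standard approximate-identity estimate after the rescaling $y=(\sigma t)^{1/(2\beta)}z$. The only cosmetic point is that ${\sf S}(0)=\mathrm{Id}$ should simply be \emph{defined} (the kernel is not given at $t=0$), with your strong-continuity computation then showing the extension is continuous; this is what you effectively do.

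As for comparison with the paper: the paper does not prove this proposition at all but defers entirely to \cite{Besteiro2018}, Proposition~2.2. Your write-up is therefore strictly more informative, supplying a complete self-contained proof where the paper gives only a citation. The trade-off is length versus self-containment; your approach makes the note independent of the companion paper at the cost of a paragraph of routine analysis.
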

\begin{proof}
The proof can be found in \cite{Besteiro2018} Proposition 2.2.
\end{proof}
\begin{remark}
If $u\in C_{\rm u}(\mathbb{R}^{d},Z)$ is a constant, then ${\sf S}(t)u=u$.
\end{remark}
 In this paper, we consider integral solutions of the problem \eqref{eq: reaction-diffusion}.
We say that $u\in C([0,T],C_{\rm u}(\mathbb{R}^{d},Z))$ is a mild solution of \eqref{eq: reaction-diffusion}
iff $u$ verifies
\begin{align}
\label{eq: mild solution}
u(t) = {\sf S}(t)u_{0} + \int_{0}^{t} {\sf S}(t-t')F(t',u(t')) dt'.
\end{align}
Since our method to build solutions of \eqref{eq: mild solution} is based on the application of the Lie-Trotter method, it is necessary to study the non-linear problem associated with $ F $. 
We remark that some regularity condition is necessary for convergence, as it is shown in the counterexample given in \cite{Canzi2012}.

Let $F:\mathbb{R}_{+} \times Z \to Z$ be a continuous map, we say that is locally Lipschitz 
in the second variable if, given $R,T>0$ there exists $L = L(R,T)>0$ such that if $t\in [0,T]$ and $z,\tilde{z}\in Z$ 
with $|z|_{Z},|\tilde{z}|_{Z} \le R$, then
\begin{align*}
|F(t,z) - F(t,\tilde{z})|_{Z} \le L |z - \tilde{z}|_{Z}.
\end{align*}
In this case, for any $z_{0}\in Z$
there exists a unique (maximal) solution of the Cauchy problem
\begin{align}
\label{eq: integral equation}
z(t) = z_{0} + \int_{t_{0}}^{t}F(t',z(t'))dt'
\end{align}
defined on $[t_{0},t_{0}+T^{*}(t_{0},z_{0}))$, with $T^{*}(t_{0},z_{0})$ is the maximal time of existence.
It is easy to see that 
there exists a nonincreasing function $\mathcal{T}:\mathbb{R}_{+}^{2} \to \mathbb{R}_{+}$,
such that
\begin{align*}
 \mathcal{T}(T,R)\le \inf\{T^{*}(t_{0},z_{0}):0\le t_{0} \le T, |z_{0}|_{Z} \le R\}.
\end{align*}
Also, one of the following alternatives holds:
\begin{itemize}
\item[-] $T^{*}(t_{0},z_{0}) = \infty$;
\item[-] $T^{*}(t_{0},z_{0}) < \infty$ and $|z(t)|_{Z} \to \infty$ when $t \uparrow t_{0} + T^{*}(t_{0},z_{0})$.
\end{itemize}
We can see that $F:\mathbb{R}_{+}\times C_{\rm u}(\mathbb{R}^{d},Z)\to C_{\rm u}(\mathbb{R}^{d},Z)$,
given by $F(t,u)(x) = F(t,u(x))$ is continuous and locally Lipschitz in the second variable.
Therefore, we can consider problem \eqref{eq: integral equation} in $C_{\rm u}(\mathbb{R}^{d},Z)$.
%
We denote by $\mathsf{N}:\mathbb{R} \times \mathbb{R} \times C_{\rm u}(\mathbb{R}^{d},Z)\to C_{\rm u}(\mathbb{R}^{d},Z)$ the flow 
generated by the integral equation \eqref{eq: integral equation} as $u(t)=\mathsf{N}(t,t_0,u_0)$,
defined for $t_{0} \le t < t_{0} + T^{*}(t_{0},u_{0})$.

The following result relates the solutions of \eqref{eq: integral equation} with the problem \eqref{eq: mild solution} in the case of having constant initial data.

\begin{proposition}
\label{pr: constant}
If $u_{0}$ is a constant function, then $u(t) = \mathsf{N}(t,t_0,u_0)$ is a solution of \eqref{eq: mild solution}.
\end{proposition}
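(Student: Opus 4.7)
The plan is to combine two facts: the remark that $\mathsf{S}(t)$ fixes constant functions, and uniqueness of the flow $\mathsf{N}$. The whole proof reduces to showing that the subspace of constant-in-$x$ functions is invariant under $\mathsf{N}$, because on such functions the semigroup $\mathsf{S}$ acts trivially and the mild equation collapses to the defining integral equation of $\mathsf{N}$.

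First I would argue that $u(t) = \mathsf{N}(t,t_0,u_0)$ is itself constant in $x$ whenever $u_0$ is. Let $z_0\in Z$ denote the common value of $u_0$. Because $F:\mathbb{R}_+\times Z\to Z$ is locally Lipschitz, the Cauchy problem
\begin{align*}
z(t) = z_0 + \int_{t_0}^{t} F(t',z(t'))\,dt'
\end{align*}
admits a unique maximal solution $z:[t_0,t_0+T^\ast)\to Z$. Viewing $v(t)(x):=z(t)$ as an element of $C_{\rm u}(\mathbb{R}^d,Z)$ and using that $F$ acts pointwise, $v$ satisfies \eqref{eq: integral equation} in $C_{\rm u}(\mathbb{R}^d,Z)$ with initial datum $u_0$. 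Uniqueness of the flow therefore gives $u(t)=v(t)$, so $u(t)$ is a constant function of $x$ for every $t$ where it is defined.

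Next I would apply the remark. Since $u_0$ is constant, $\mathsf{S}(t)u_0 = u_0$; and since each $u(t')$ is constant, so is $F(t',u(t'))$ (again because $F$ acts pointwise), whence $\mathsf{S}(t-t')F(t',u(t'))=F(t',u(t'))$. Substituting into the right-hand side of \eqref{eq: mild solution} yields
\begin{align*}
\mathsf{S}(t)u_0 + \int_0^t \mathsf{S}(t-t')F(t',u(t'))\,dt' = u_0 + \int_0^t F(t',u(t'))\,dt' = u(t),
\end{align*}
the last equality being the definition of $u=\mathsf{N}(\cdot,0,u_0)$. This verifies \eqref{eq: mild solution}.

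The only delicate point is the invariance of the constant subspace under the flow, and that is a direct consequence of uniqueness for \eqref{eq: integral equation} in $C_{\rm u}(\mathbb{R}^d,Z)$; everything else is a one-line substitution.
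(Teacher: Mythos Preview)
Your proof is correct and follows essentially the same approach as the paper: establish via uniqueness of \eqref{eq: integral equation} that $u(t)$ remains constant in $x$, then use the remark that $\mathsf{S}(t)$ fixes constants to collapse \eqref{eq: mild solution} to the defining integral equation of $\mathsf{N}$. You simply spell out in more detail the invariance step that the paper asserts in one line.
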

\begin{proof}
Since $u_{0}$ is a constant function, from the uniqueness of the problem \eqref{eq: integral equation}, we have
$u(t)$ is a constant function for any $t>0$ where the solution is defined. Therefore,
\begin{align*}
u(t) & = u_{0} + \int_{0}^{t} F(t',u(t')) dt'
= {\sf S}(t)u_{0} + \int_{0}^{t} {\sf S}(t-t')F(t',u(t')) dt',
\end{align*}
which proves our assertion.
\end{proof}

\begin{theorem}
	\label{th: local existence}
	There exists a  function $T^{*}:C_{\rm u}(\mathbb{R}^{d},Z)\to \mathbb{R}_{+}$ such that
	for $u_{0}\in C_{\rm u}(\mathbb{R}^{d},Z)$, exists a unique $u\in C([0,T^{*}(u_{0})),C_{\rm u}(\mathbb{R}^{d},Z))$ mild solution
	of \eqref{eq: reaction-diffusion} with $u(0) = u_{0}$. Moreover, one of the following alternatives holds:
	\begin{itemize}
		\item $T^{*}(u_{0}) = \infty$;
		\item $T^{*}(u_{0}) < \infty$ and $\lim_{t \uparrow T^{*}(u_{0})}\|u(t)\|_{C_{\rm u}(\mathbb{R}^{d},Z)} = \infty$.
	\end{itemize}
\end{theorem}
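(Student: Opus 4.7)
The proof is a standard Picard-type fixed point argument on the Duhamel formulation \eqref{eq: mild solution}, followed by a continuation/blow-up alternative. I would organize it as follows.

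First, I fix $u_{0}\in C_{\rm u}(\mathbb{R}^{d},Z)$, set $R = \|u_{0}\|_{\infty,Z}+1$ and, for $T>0$ to be chosen, consider the complete metric space
\begin{equation*}
E_{T,R} = \bigl\{u\in C([0,T],C_{\rm u}(\mathbb{R}^{d},Z)) : \|u(t)\|_{\infty,Z}\le R \text{ for all } t\in[0,T]\bigr\}
\end{equation*}
with the sup metric, and the map
\begin{equation*}
\Phi(u)(t) = \mathsf{S}(t)u_{0} + \int_{0}^{t}\mathsf{S}(t-t')F(t',u(t'))\,dt'.
\end{equation*}
Because $\mathsf{S}$ is a contraction semigroup, $\|\mathsf{S}(t)u_{0}\|_{\infty,Z}\le\|u_{0}\|_{\infty,Z}$. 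Since $F:\mathbb{R}_{+}\times C_{\rm u}(\mathbb{R}^{d},Z)\to C_{\rm u}(\mathbb{R}^{d},Z)$ is continuous and locally Lipschitz, set $M = \sup\{\|F(t,v)\|_{\infty,Z}: t\in[0,T], \|v\|_{\infty,Z}\le R\}$ and let $L$ be the Lipschitz constant on the same set. Contractivity of $\mathsf{S}(t-t')$ then gives
\begin{equation*}
\|\Phi(u)(t)\|_{\infty,Z}\le \|u_{0}\|_{\infty,Z} + T M, \qquad \|\Phi(u)(t)-\Phi(v)(t)\|_{\infty,Z}\le L T \sup_{s\in[0,T]}\|u(s)-v(s)\|_{\infty,Z}.
\end{equation*}
Choosing $T$ small enough so that $TM\le 1$ and $LT<1/2$ makes $\Phi:E_{T,R}\to E_{T,R}$ a strict contraction, and the Banach fixed point theorem produces a unique mild solution on $[0,T]$. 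Continuity of $u$ in $t$ follows from the strong continuity of $\mathsf{S}$ and dominated convergence inside the Duhamel term.

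Next, I define $T^{*}(u_{0})$ as the supremum of all $T>0$ for which there exists a mild solution on $[0,T]$. A standard argument gives uniqueness on the entire common interval of definition: if $u,\tilde{u}$ are two mild solutions with the same initial datum and $A(t)=\|u(t)-\tilde{u}(t)\|_{\infty,Z}$, then using contractivity of $\mathsf{S}$ and local Lipschitz continuity of $F$ one obtains $A(t)\le L\int_{0}^{t}A(s)\,ds$ on any interval where both solutions stay in a ball, and Gr\"onwall forces $A\equiv 0$. Piecing together unique local solutions in the usual way yields a maximal mild solution $u\in C([0,T^{*}(u_{0})),C_{\rm u}(\mathbb{R}^{d},Z))$.

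For the blow-up alternative, I argue by contradiction. Suppose $T^{*}(u_{0})<\infty$ and $\liminf_{t\uparrow T^{*}(u_{0})}\|u(t)\|_{\infty,Z}=:R_{0}<\infty$. Pick $t_{n}\uparrow T^{*}(u_{0})$ with $\|u(t_{n})\|_{\infty,Z}\le R_{0}+1$. Repeating the fixed point construction above but starting from time $t_{n}$ with data $u(t_{n})$ and radius $R_{0}+2$ produces a uniform existence time $\tau = \tau(R_{0}+2)>0$ depending only on the size of the data and on the Lipschitz/bound data of $F$ on a fixed ball up to a horizon slightly past $T^{*}(u_{0})$. Choosing $n$ with $T^{*}(u_{0})-t_{n}<\tau/2$ and invoking uniqueness to glue the two solutions gives a mild solution on $[0,t_{n}+\tau)$, which strictly extends $[0,T^{*}(u_{0}))$, contradicting maximality. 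Hence either $T^{*}(u_{0})=\infty$ or $\|u(t)\|_{\infty,Z}\to\infty$ as $t\uparrow T^{*}(u_{0})$.

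The only delicate point is the uniform lower bound on the existence time in the last step: one must make sure the constants $M$ and $L$ used in the local theorem can be taken uniform along the sequence $t_{n}$. This is where the continuity of $F$ and the boundedness of $\{(t,v):t\in[0,T^{*}(u_{0})],\ \|v\|_{\infty,Z}\le R_{0}+2\}$ are used, exactly mirroring the function $\mathcal{T}(T,R)$ introduced earlier in the paper for the ODE flow $\mathsf{N}$.
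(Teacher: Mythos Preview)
Your argument is correct and is exactly the standard contraction-mapping proof for semilinear evolution equations; the paper itself does not give a proof but simply cites Theorem~4.3.4 of Cazenave--Haraux, whose content is precisely the Picard/Duhamel scheme you wrote out. In that sense your proposal and the paper's ``proof'' coincide, with yours being the unpacked version of the cited reference.
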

\begin{proof}
	See Theorem 4.3.4 in \cite{Cazenave1998}.
\end{proof}

\begin{proposition}
	\label{pr: continuous dependence}
	Under conditions of theorem above, then
	\begin{enumerate}
		\item $T^{*}:C_{\rm u}(\mathbb{R}^{d},Z)\to \mathbb{R}_{+}$ is lower semi-continuous;
		\item If $u_{0,n} \to u_{0}$ in $C_{\rm u}(\mathbb{R}^{d},Z)$ and $0 < T < T^{*}(u_{0})$, then 
		$u_{n} \to u$ in the Banach space $C([0,T],C_{\rm u}(\mathbb{R}^{d},Z))$.
	\end{enumerate}
\end{proposition}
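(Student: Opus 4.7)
The plan is to reduce both claims to a single Gronwall-type estimate obtained via a continuation (bootstrap) argument. Fix $u_{0}\in C_{\rm u}(\mathbb{R}^{d},Z)$ and $0<T<T^{*}(u_{0})$, let $u$ denote the mild solution of \eqref{eq: mild solution} with $u(0)=u_{0}$, and set $M=\sup_{t\in[0,T]}\|u(t)\|_{\infty,Z}$, which is finite by continuity. Set $R=M+1$ and let $L=L(R,T)$ be the local Lipschitz constant of $F$ on the ball $\{z\in Z:|z|_{Z}\le R\}$, uniform for $t\in[0,T]$.

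Next, for any competing datum $v_{0}\in C_{\rm u}(\mathbb{R}^{d},Z)$ with $\|v_{0}-u_{0}\|_{\infty,Z}<\delta$ (with $\delta>0$ to be fixed below), let $v$ be the mild solution on $[0,T^{*}(v_{0}))$ and introduce the stopping time
\begin{equation*}
\tau=\sup\bigl\{t\in[0,\min(T,T^{*}(v_{0}))):\|v(s)\|_{\infty,Z}\le R\text{ for all }s\in[0,t]\bigr\}.
\end{equation*}
On $[0,\tau]$ both $u(s)$ and $v(s)$ lie in the $R$-ball, so subtracting the mild formulations \eqref{eq: mild solution}, using the contractivity of ${\sf S}(t)$ established previously, and applying the Lipschitz bound on $F$ produces
\begin{equation*}
\|v(t)-u(t)\|_{\infty,Z}\le\|v_{0}-u_{0}\|_{\infty,Z}+L\int_{0}^{t}\|v(s)-u(s)\|_{\infty,Z}\,ds,
\end{equation*}
and Gronwall's lemma yields $\|v(t)-u(t)\|_{\infty,Z}\le\delta e^{LT}$ for $t\in[0,\tau]$.

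Finally, choose $\delta=\tfrac{1}{2}e^{-LT}$. Then on $[0,\tau]$ one has $\|v(t)\|_{\infty,Z}\le M+\tfrac{1}{2}<R$, and by continuity this strict inequality cannot be saturated at $\tau$, forcing $\tau=\min(T,T^{*}(v_{0}))$. The blow-up alternative of Theorem \ref{th: local existence} then rules out $T^{*}(v_{0})\le T$, since $\|v\|_{\infty,Z}$ remains bounded by $R$; hence $T^{*}(v_{0})>T$, which gives lower semi-continuity of $T^{*}$ at $u_{0}$ (claim 1). Applying the Gronwall bound along any sequence $u_{0,n}\to u_{0}$ then yields $\|u_{n}-u\|_{C([0,T],C_{\rm u}(\mathbb{R}^{d},Z))}\to 0$ (claim 2). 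The main obstacle is the circular dependence between the Lipschitz constant and the a priori bound on $\|v\|_{\infty,Z}$: one needs the bound to invoke $L$, yet the bound only emerges from the estimate itself. The stopping-time device, together with the strict inequality $M+\tfrac{1}{2}<R$, is precisely what closes the bootstrap loop.
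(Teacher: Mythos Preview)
Your argument is correct. The paper itself does not prove this proposition; it simply refers to Proposition~4.3.7 of Cazenave--Haraux, so there is no in-paper proof to compare against line by line. What you have written is precisely the standard bootstrap/Gronwall proof that one finds in that reference: fix a compact subinterval of the maximal existence interval, choose $R$ strictly larger than $\sup_{[0,T]}\|u(t)\|_{\infty,Z}$, use the contraction property of ${\sf S}$ together with the local Lipschitz bound on $F$ to get a Gronwall inequality up to the first exit time from the $R$-ball, and then observe that the resulting estimate keeps the perturbed solution strictly inside the ball, so the exit time must coincide with $\min(T,T^{*}(v_{0}))$; the blow-up alternative then forces $T^{*}(v_{0})>T$. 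Your closing remark about the ``circular'' difficulty and the role of the stopping time is exactly the point of the argument. One small cosmetic comment: it is worth noting that $\delta=\tfrac12 e^{-LT}\le\tfrac12$ also guarantees $\|v_{0}\|_{\infty,Z}<R$, so that $\tau>0$ and the bootstrap actually starts; everything else is in order.
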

\begin{proof}
	See Proposition 4.3.7 in \cite{Cazenave1998}.
\end{proof}

\section{Periodic solutions}
\label{Sec.3}

In this section, we will analyze the existence of solutions for the fractional reaction diffusion equation by applying Splitting methods to functions that have two main characteristics: these are direct sum of functions of periodic type and functions that tend to zero at infinity. This type of solution is also studied in the non-linear Schroedinger equation, under the name of "Peregrine solitons" \cite{Peregrine1983}. Well posedness results are obtained for each particular characteristic, to then combine both results.
In addition, we will observe that the evolution of the periodic part is independent of the part that tends to zero at infinity.

For instance, suppose that the non-linearity is of polynomial type (as in the Fitzhugh-Nagumo equation, see \cite{Asgari2011}), in this case we use $F(u)=u^2$.
If $u(t)=v(t)+w(t)$, where $v(t)$ is a periodic function and 
$w(t)$ is a function that tends to zero when the spatial variable tends to infinity, then we have that

\begin{align*}
F(u)=F(v+w)=(v+w)^2=v^2+2vw+w^2    
\end{align*}
where, $v^2$ is periodic and $2vw+w^2$ tends to zero.
In this specific case we can appreciate the \textit{absorption} of the part that tends to zero, in the crossed terms. As $v^2=F(v)$, we expect that the periodic part of the initial data evolve independently from the part that tends to zero for the non linear equation. In this section we obtain general results to which this example refers.

Let $\{\gamma_{1},\ldots,\gamma_{q}\}$ be $q$ linearly independent vectors of $\mathbb{R}^{d}$ and 
let $\Gamma$ be the lattice generated, i.e., $\Gamma=\{n_{1}\gamma_{1}+\cdots+n_{q}\gamma_{q}: n_{j}\in\mathbb{Z}\}$.
A function $u\in C_{\rm u}(\mathbb{R}^{d},Z)$ is $\Gamma$--periodic if $u(x+\gamma) =u(x) $
for any $\gamma\in\Gamma$.
We denote the set of $\Gamma$--periodic functions of $C_{\rm u}(\mathbb{R}^{d},Z)$ by $C_{\rm u}(\mathbb{R}^{d}/\Gamma,Z)$.

We consider the space $C_{0}(\mathbb{R}^{d},Z)$ of functions which converge to $0$ when $|x|\to\infty$. 
It is easy to prove the following result.
\begin{proposition}
$C_{\rm u}(\mathbb{R}^{d}/\Gamma,Z),C_{0}(\mathbb{R}^{d},Z)\subset C_{\rm u}(\mathbb{R}^{d},Z)$ are closed subspaces. 
Moreover, 
$C_{\rm u}(\mathbb{R}^{d}/\Gamma,Z)\cap C_{0}(\mathbb{R}^{d},Z)=\{0\}$.
\end{proposition}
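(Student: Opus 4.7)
The plan is to dispatch the three claims separately; all are standard, but I will make sure the argument for the intersection uses only that the lattice is nontrivial (so it works even when $q<d$).

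First I would show that $C_0(\mathbb{R}^d,Z)$ is closed in $C_{\rm u}(\mathbb{R}^d,Z)$. Take a sequence $u_n\in C_0(\mathbb{R}^d,Z)$ converging uniformly to some $u\in C_{\rm u}(\mathbb{R}^d,Z)$. Given $\varepsilon>0$, choose $n$ with $\|u_n-u\|_{\infty,Z}<\varepsilon/2$, then choose $R>0$ such that $|u_n(x)|_Z<\varepsilon/2$ for $|x|>R$; a triangle inequality gives $|u(x)|_Z<\varepsilon$ for $|x|>R$, so $u\in C_0(\mathbb{R}^d,Z)$. For $C_{\rm u}(\mathbb{R}^d/\Gamma,Z)$, the argument is even simpler: the identity $u_n(x+\gamma)=u_n(x)$ passes to uniform limits, so the limit is again $\Gamma$-periodic.

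Next I would verify the intersection is trivial. Suppose $u\in C_{\rm u}(\mathbb{R}^d/\Gamma,Z)\cap C_0(\mathbb{R}^d,Z)$ and fix $x\in\mathbb{R}^d$. Since $\gamma_1\neq 0$, by $\Gamma$-periodicity
\begin{equation*}
u(x)=u(x+k\gamma_1)\qquad\text{for every }k\in\mathbb{Z},
\end{equation*}
and $|x+k\gamma_1|\to\infty$ as $k\to\infty$. The hypothesis $u\in C_0(\mathbb{R}^d,Z)$ then forces $u(x+k\gamma_1)\to 0$, hence $|u(x)|_Z=0$. Since $x$ was arbitrary, $u\equiv 0$.

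The only step that could be subtle is this last one, and it really just requires one lattice direction, not the full rank $q$; invoking linear independence only to know $\gamma_1\neq 0$ is enough. Everything else is a direct verification from the definitions, so no real obstacle is expected.
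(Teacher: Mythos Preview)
Your argument is correct and matches the paper's approach: the paper only writes out the intersection claim, using exactly your idea that $u(x)=\lim_{|\gamma|\to\infty}u(x+\gamma)=0$ for $u$ both $\Gamma$-periodic and in $C_0$. Your treatment is in fact more complete, since you also spell out the (routine) closedness of the two subspaces, which the paper leaves as ``easy to prove''.
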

\begin{proof}
 Let $u\in C_{\rm u}(\mathbb{R}^{d}/\Gamma,Z)$, we set $x\in\mathbb{R}^{d}$
 $u(x)= \lim_{|\gamma|\to\infty}u(x+\gamma)$. If $u\in C_{0}(\mathbb{R}^{d},Z)$, then
 $\lim_{|\gamma|\to\infty}u(x+\gamma)=0$. Therefore, $u(x)=0$ for any $x\in\mathbb{R}^{d}$.
\end{proof}
\begin{lemma}
Let $X$ be a Banach space and let $X_{1},X_{2}\subset X$ be closed subspaces
such that $ X_{1}\bigcap X_{2}=\{0\}$, the following statement are equivalent
\begin{enumerate}[i.]
	\item $X_{1}\oplus X_{2}$ is closed.
	\item The projector $P:X_{1}\oplus X_{2} \to X_{1}$ is continuous.
\end{enumerate}
\end{lemma}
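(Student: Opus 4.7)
The plan is to establish both implications separately, with the harder direction relying on the open mapping theorem.

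For the direction (ii) $\Rightarrow$ (i), I would argue directly using Cauchy sequences. Assume $P$ is continuous on $X_1 \oplus X_2$, so there is a constant $C>0$ with $\|P(x)\|\le C\|x\|$ for every $x\in X_1\oplus X_2$. Take any sequence $x_n = y_n + z_n \in X_1\oplus X_2$ converging in $X$ to some $x$. Since $\{x_n\}$ is Cauchy and $P$ is linear and bounded, $\{y_n\}=\{P(x_n)\}$ is Cauchy in $X_1$, which is closed and therefore complete, so $y_n\to y$ for some $y\in X_1$. Then $z_n = x_n - y_n \to x-y$, and since $X_2$ is closed, $x-y\in X_2$. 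Hence $x = y + (x-y)\in X_1\oplus X_2$, showing the sum is closed.

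For the direction (i) $\Rightarrow$ (ii), which I expect to be the main step, I would invoke the open mapping theorem. Consider the product $X_1\times X_2$, which is a Banach space under the norm $\|(y,z)\|_{1} = \|y\|+\|z\|$ since $X_1$ and $X_2$ are closed in $X$ and hence complete. Define the linear map
\begin{equation*}
\Phi:X_1\times X_2 \to X_1\oplus X_2, \qquad \Phi(y,z) = y+z.
\end{equation*}
The triangle inequality gives $\|\Phi(y,z)\| \le \|(y,z)\|_1$, so $\Phi$ is bounded. The condition $X_1\cap X_2=\{0\}$ makes $\Phi$ injective, and $\Phi$ is surjective by the definition of $X_1\oplus X_2$. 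Assuming (i), the target $X_1\oplus X_2$ is closed in $X$, hence itself a Banach space. The open mapping theorem therefore guarantees that $\Phi^{-1}$ is continuous, yielding a constant $M>0$ with $\|y\|+\|z\| \le M\|y+z\|$ for all $y\in X_1$, $z\in X_2$. Taking only the first term gives $\|P(y+z)\| = \|y\|\le M\|y+z\|$, which is the continuity of $P$.

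The main obstacle is the (i) $\Rightarrow$ (ii) direction, because one cannot produce the estimate $\|y\|\le M\|y+z\|$ by elementary means; one genuinely needs completeness and the open mapping theorem. Once the reduction to studying $\Phi:X_1\times X_2 \to X_1\oplus X_2$ is set up, however, the argument is standard, and the other direction is essentially a routine Cauchy-sequence verification.
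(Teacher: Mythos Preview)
Your proof is correct and follows essentially the same strategy as the paper for the substantive direction (i) $\Rightarrow$ (ii): both introduce the bijection $\Phi:X_1\times X_2\to X_1\oplus X_2$, $(y,z)\mapsto y+z$, and appeal to the open mapping/bounded inverse theorem (the paper phrases it as the closed graph theorem) to get continuity of $\Phi^{-1}$, whence $P=\pi_1\circ\Phi^{-1}$ is continuous. For (ii) $\Rightarrow$ (i) you give a direct Cauchy-sequence argument, whereas the paper argues in one line via $X_1\oplus X_2 = P^{-1}X_1$; your version is more explicit and arguably cleaner, since the paper's preimage formulation tacitly requires interpreting $P$ on a larger domain, but the underlying content is the same.
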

\begin{proof}
Since $X_{1} \oplus X_{2}$ is a Banach space, the linear  map $\phi:X_{1}\times X_{2} \to X_{1} \oplus X_{2}$
given by $\phi(x_{1},x_{2}) = x_{1}+x_{2}$
is bijective, and from the closed graph theorem we have $\phi$ and $\phi^{-1}$ are continuous operator.
We can write $P=\pi_{1} \phi^{-1}$ and then $P$ is continuous.
On the other hand, $X_{1}\oplus X_{2} = P^{-1}X_{1}$, since $P$ continuous and $X_{1}$ a closed subspace,
$X_{1}\oplus X_{2}$ is closed.
\end{proof}
\begin{lemma}
\label{le: proyector}
 The projector $P:C_{\rm u}(\mathbb{R}^{d}/\Gamma,Z)\oplus C_{0}(\mathbb{R}^{d},Z) \to C_{\rm u}(\mathbb{R}^{d}/\Gamma,Z)$
 is continuous.
\end{lemma}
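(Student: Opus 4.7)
The plan is to prove the continuity of $P$ directly by establishing the pointwise bound $\|Pu\|_{\infty,Z} \le \|u\|_{\infty,Z}$, exploiting the same idea used in the earlier proposition showing that the intersection of the two subspaces is trivial. Namely, if $u = v+w$ with $v\in C_{\rm u}(\mathbb{R}^{d}/\Gamma,Z)$ and $w\in C_{0}(\mathbb{R}^{d},Z)$, then $v$ can be recovered from $u$ by evaluating along the lattice at infinity, which automatically bounds $v$ by $u$.

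In detail, I would fix $u \in C_{\rm u}(\mathbb{R}^{d}/\Gamma,Z)\oplus C_{0}(\mathbb{R}^{d},Z)$ with its unique decomposition $u=v+w$, and fix $x\in\mathbb{R}^d$. For any $\gamma\in\Gamma$, the $\Gamma$--periodicity of $v$ gives $v(x+\gamma)=v(x)$, so
\begin{equation*}
v(x) = u(x+\gamma) - w(x+\gamma).
\end{equation*}
Since $|x+\gamma|\to\infty$ as $|\gamma|\to\infty$ and $w\in C_{0}(\mathbb{R}^{d},Z)$, we have $|w(x+\gamma)|_Z\to 0$. Taking the limit along any sequence $\gamma_n\in\Gamma$ with $|\gamma_n|\to\infty$ yields
\begin{equation*}
|v(x)|_Z \le \limsup_{n\to\infty}|u(x+\gamma_n)|_Z \le \|u\|_{\infty,Z}.
\end{equation*}
Taking the supremum over $x\in\mathbb{R}^d$ gives $\|Pu\|_{\infty,Z}=\|v\|_{\infty,Z}\le \|u\|_{\infty,Z}$, hence $P$ is continuous (in fact a contraction).

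There is essentially no obstacle here: the only subtlety is making sure $|\gamma|\to\infty$ in $\Gamma$ forces $|x+\gamma|\to\infty$ in $\mathbb{R}^d$, which is immediate since $\Gamma$ is a lattice generated by linearly independent vectors, so $|\gamma|\to\infty$ in the lattice coincides with $|\gamma|\to\infty$ in $\mathbb{R}^d$, and $|x+\gamma|\ge |\gamma|-|x|$. Once this is noted, the bound is a one-line consequence. As an alternative presentation, one could instead verify the hypothesis of the previous lemma by showing that the direct sum is closed, but the pointwise argument above is more direct and actually gives the sharper statement that $P$ has operator norm at most $1$.
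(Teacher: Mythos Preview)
Your proposal is correct and follows essentially the same argument as the paper: recover $v(x)$ as the limit of $u(x+\gamma)$ along $\gamma\in\Gamma$ with $|\gamma|\to\infty$, and conclude $\|Pu\|_{\infty,Z}\le\|u\|_{\infty,Z}$. Your write-up is in fact slightly more careful, making explicit why $|x+\gamma|\to\infty$ and passing through a $\limsup$, but the idea is identical.
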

\begin{proof}
 Let $u=v+w\in C_{\rm u}(\mathbb{R}^{d}/\Gamma,Z)\oplus C_{0}(\mathbb{R}^{d},Z)$,
 $v\in C_{\rm u}(\mathbb{R}^{d}/\Gamma,Z)$ and $w\in C_{0}(\mathbb{R}^{d},Z)$. For any $x\in\mathbb{R}^{d}$, we can see that
 \begin{align*}
 v(x) = \mathop{\lim_{|\gamma|\to\infty}}_{\gamma\in\Gamma}v(x+\gamma) = 
 \mathop{\lim_{|\gamma|\to\infty}}_{\gamma\in\Gamma}u(x+\gamma),
 \end{align*}
 then $|v(x)|\le \|u\|_{C_{\rm u}(\mathbb{R}^{d},Z)}$, 
 which implies $\|v\|_{C_{\rm u}(\mathbb{R}^{d},Z)} = \|Pu\|_{C_{\rm u}(\mathbb{R}^{d},Z)} \le \|u\|_{C_{\rm u}(\mathbb{R}^{d},Z)}$.
\end{proof}
\begin{corollary}
The direct sum $X_{\Gamma,Z} = C_{\rm u}(\mathbb{R}^{d}/\Gamma,Z)\oplus C_{0}(\mathbb{R}^{d},Z)$
is a closed subspace of $C_u(\mathbb{R}^{d},Z)$.
\end{corollary}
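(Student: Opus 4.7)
The plan is to combine the three ingredients already established: the proposition showing that $C_{\rm u}(\mathbb{R}^{d}/\Gamma,Z)$ and $C_{0}(\mathbb{R}^{d},Z)$ are closed subspaces of $C_{\rm u}(\mathbb{R}^{d},Z)$ with trivial intersection, the abstract lemma characterizing closedness of a direct sum through continuity of the projector, and Lemma \ref{le: proyector} giving precisely that continuity. The corollary should then fall out in one line.

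More concretely, I would set $X = C_{\rm u}(\mathbb{R}^{d},Z)$, $X_{1} = C_{\rm u}(\mathbb{R}^{d}/\Gamma,Z)$ and $X_{2} = C_{0}(\mathbb{R}^{d},Z)$. The previous proposition supplies the hypotheses of the abstract lemma: $X_{1}$ and $X_{2}$ are closed in $X$ and $X_{1} \cap X_{2} = \{0\}$. The abstract lemma then reduces the question to the continuity of the projector $P : X_{1} \oplus X_{2} \to X_{1}$, which is exactly the content of Lemma \ref{le: proyector} (indeed with norm bound $\|Pu\| \le \|u\|$). Applying the "continuous projector $\Rightarrow$ closed direct sum" direction of the abstract lemma yields that $X_{\Gamma,Z} = X_{1} \oplus X_{2}$ is closed in $C_{\rm u}(\mathbb{R}^{d},Z)$.

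Since every step is a direct invocation of a previous result, I do not foresee any real obstacle; the only thing to double-check is that the hypotheses of the abstract lemma are verified in the right ambient space, namely that $X_{1}$ and $X_{2}$ are closed in $C_{\rm u}(\mathbb{R}^{d},Z)$ (not merely in themselves), which is what the earlier proposition provides. No additional estimates, approximations, or constructions are needed.
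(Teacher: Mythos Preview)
Your proposal is correct and matches the paper's intended argument: the corollary is stated without proof precisely because it follows immediately from the abstract lemma together with Lemma~\ref{le: proyector}, exactly as you outline. There is nothing to add.
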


To obtain the existence of solutions in the space $ X_{\Gamma, Z} $, we first study each case separately. We analyze the existence of solutions for the case of $ \Gamma $ periodic functions using the translation function.

Given $\gamma\in\mathbb{R}^{d}$ define ${\sf T}_{\gamma}: C_{\rm u}(\mathbb{R}^{d},Z) \to C_{\rm u}(\mathbb{R}^{d},Z)$ as $({\sf T}_{\gamma}u)(x) = u(x+\gamma)$.
Since ${\sf S}(t)$ is a convolution operator, it is easy to see that
${\sf T}_{\gamma}{\sf S}(t) = {\sf S}(t) {\sf T}_{\gamma}$.
Using that ${\sf T}_{\gamma}F(t,u) = F(t,{\sf T}_{\gamma}u)$ we obtain
\begin{align*}
{\sf T}_{\gamma}u(t) = {\sf S}(t){\sf T}_{\gamma}u_{0} + \int_{0}^{t}{\sf S}(t-t')F(t,{\sf T}_{\gamma}u(t'))dt'.
\end{align*}
Therefore, ${\sf T}_{\gamma}u$ is the solution of \eqref{eq: mild solution}
with initial data ${\sf T}_{\gamma}u_{0}$.
\begin{proposition}
\label{pr: globalexistenceperiodic}
 If $u_{0}\in C_{\rm u}(\mathbb{R}^{d}/\Gamma,Z)$, then the solution $u$ of the equation 
 \eqref{eq: mild solution} verifies $u(t)\in C_{\rm u}(\mathbb{R}^{d}/\Gamma,Z)$
for $0\le t <T^{*}(u_{0})$.
\end{proposition}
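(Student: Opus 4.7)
The plan is to exploit the translation invariance of the mild solution already established in the discussion preceding the proposition. Fix $\gamma\in\Gamma$. Since $u_{0}$ is $\Gamma$--periodic, we have ${\sf T}_{\gamma}u_{0}=u_{0}$. From the identities ${\sf T}_{\gamma}{\sf S}(t)={\sf S}(t){\sf T}_{\gamma}$ and ${\sf T}_{\gamma}F(t,u)=F(t,{\sf T}_{\gamma}u)$ derived above, the function $t\mapsto {\sf T}_{\gamma}u(t)$ satisfies the mild equation \eqref{eq: mild solution} with initial data ${\sf T}_{\gamma}u_{0}=u_{0}$.

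Next I would invoke the uniqueness part of Theorem \ref{th: local existence}: both $u(t)$ and ${\sf T}_{\gamma}u(t)$ are mild solutions on $[0,T^{*}(u_{0}))$ starting from $u_{0}$ (note that ${\sf T}_{\gamma}$ is an isometry on $C_{\rm u}(\mathbb{R}^{d},Z)$, so ${\sf T}_{\gamma}u\in C([0,T^{*}(u_{0})),C_{\rm u}(\mathbb{R}^{d},Z))$ and its maximal existence time coincides with that of $u$). Uniqueness therefore forces ${\sf T}_{\gamma}u(t)=u(t)$ for every $t\in[0,T^{*}(u_{0}))$, that is, $u(t,x+\gamma)=u(t,x)$ for all $x\in\mathbb{R}^{d}$. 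Since $\gamma\in\Gamma$ was arbitrary, $u(t)\in C_{\rm u}(\mathbb{R}^{d}/\Gamma,Z)$ as claimed.

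There is really no substantive obstacle here; the only point worth a small check is that ${\sf T}_{\gamma}u$ genuinely lives in the same solution space as $u$ and shares its maximal time, so that uniqueness applies on the full interval $[0,T^{*}(u_{0}))$. This follows because ${\sf T}_{\gamma}$ is a linear isometry of $C_{\rm u}(\mathbb{R}^{d},Z)$ that commutes with both ${\sf S}(t)$ and the Nemytskii operator induced by $F$, so the blow-up alternative in Theorem \ref{th: local existence} gives $T^{*}({\sf T}_{\gamma}u_{0})=T^{*}(u_{0})$.
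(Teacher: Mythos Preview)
Your argument is correct and is essentially identical to the paper's own proof: both use that ${\sf T}_{\gamma}u$ is a mild solution with initial data ${\sf T}_{\gamma}u_{0}=u_{0}$ and conclude ${\sf T}_{\gamma}u=u$ by the uniqueness in Theorem~\ref{th: local existence}. Your additional remark about ${\sf T}_{\gamma}$ being an isometry and the maximal times coinciding is a slight elaboration that the paper leaves implicit.
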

\begin{proof}
	Since ${\sf T}_{\gamma}u_{0} = u_{0}$ for any $\gamma\in\Gamma$,
	${\sf T}_{\gamma}u,u$ are solutions with the same initial data.
	From uniqueness, we have ${\sf T}_{\gamma}u = u$.
	Therefore, $u(t)\in C_{\rm u}(\mathbb{R}^{d}/\Gamma,Z)$.
\end{proof}

We now analyze the existence of solutions of functions that tend to zero when the spatial variable tends to infinity.

\begin{lemma}
\label{le: S(t)C0}
If $u \in  C_{0}(\mathbb{R}^{d},Z)$, then ${\sf S}(t)u \in  C_{ 0}(\mathbb{R}^{d},Z)$ for $t\in\mathbb{R}_{+}$.
\end{lemma}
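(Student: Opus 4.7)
The earlier proposition already guarantees that ${\sf S}(t)u \in C_u(\mathbb{R}^d, Z)$, so uniform continuity and boundedness of ${\sf S}(t)u$ are automatic. The only remaining task is to show that $|({\sf S}(t)u)(x)|_Z \to 0$ as $|x|\to\infty$.

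My plan is to work directly from the integral representation
\begin{equation*}
({\sf S}(t)u)(x) = \int_{\mathbb{R}^d} G_{\sigma,\beta}(t, y)\, u(x-y)\, dy
\end{equation*}
and invoke the Bochner-valued dominated convergence theorem. For each fixed $y \in \mathbb{R}^d$, the hypothesis $u \in C_0(\mathbb{R}^d, Z)$ yields $u(x-y) \to 0$ in $Z$ as $|x|\to\infty$, while the integrand is pointwise bounded in $Z$-norm by the scalar $L^1$ function $y \mapsto G_{\sigma,\beta}(t, y)\, \|u\|_{\infty, Z}$, uniformly in $x$. Passing to the limit under the integral sign then gives the conclusion.

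An equivalent but more hands-on route proceeds by density. First, assume $u$ has compact support $K$; then $u(x-y)=0$ unless $y \in x-K$, so
\begin{equation*}
|({\sf S}(t)u)(x)|_Z \le \|u\|_{\infty, Z} \int_{x-K} G_{\sigma,\beta}(t, y)\, dy,
\end{equation*}
and the right-hand side vanishes as $|x|\to\infty$ because $G_{\sigma,\beta}(t,\cdot)\in L^1(\mathbb{R}^d)$ and the translates $x-K$ escape to infinity. For general $u \in C_0(\mathbb{R}^d, Z)$, approximate uniformly by compactly supported functions $u_n = \chi_n u$ using scalar cut-offs $\chi_n$ equal to one on a large ball. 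Since ${\sf S}(t)$ is a contraction and $C_0(\mathbb{R}^d,Z)$ was shown above to be closed in $C_u(\mathbb{R}^d,Z)$, the conclusion ${\sf S}(t)u \in C_0(\mathbb{R}^d, Z)$ follows by taking limits.

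The step that requires some care is the interchange of limit and integral (or, in the variant approach, the passage from compactly supported functions to general $u \in C_0$), but both are standard once one observes that $u \in C_u(\mathbb{R}^d,Z)$ is Bochner-measurable with $\|u(\cdot)\|_Z$ bounded. No new analytical ingredients are needed beyond integrability of the kernel $G_{\sigma,\beta}(t,\cdot)$, which was established earlier; consequently I do not expect any genuine obstacle in writing out the proof.
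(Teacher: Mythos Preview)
Your first approach is essentially the paper's own proof: pass to an arbitrary sequence $|x_n|\to\infty$, dominate $G_{\sigma,\beta}(t,y)\,|u(x_n-y)|_Z$ by $G_{\sigma,\beta}(t,\cdot)\,\|u\|_{\infty,Z}\in L^1$, and apply dominated convergence. The paper takes the $Z$-norm first and uses scalar DCT rather than Bochner DCT, but this is a cosmetic difference; your proposal is correct and matches the paper.
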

\begin{proof}
Let $\{x_{n}\}_{n\in\mathbb{N}}$ be a sequence verifying $|x_{n}|\to\infty$, we have
\begin{align*}
|({\sf S}(t)u)(x_{n})|_{Z}\le \int_{\mathbb{R}^{d}}G_{\sigma,\beta}(t,y)|u(x_{n}-y)|_{Z} dy. 
\end{align*}
As $G_{\sigma,\beta}(t,.)|u(x_{n}-.)|_{Z}\le G_{\sigma,\beta}(t,.)\|u\|_{\infty,Z}$
and $G_{\sigma,\beta}(t,y)|u(x_{n}-y)|_{Z}\to 0$,
from dominated convergence theorem we obtain that
$\lim_{n\to\infty}|({\sf S}(t)u)(x_{n})|_{Z}=0$. Since $\{x_{n}\}_{n\in\mathbb{N}}$
an arbitrary sequence, we have that
${\sf S}(t)u\in C_{0}(\mathbb{R}^{d},Z)$.
\end{proof}
\begin{lemma}
\label{le: u-tilde u}
 Let $u_{0},\tilde{u}_{0} \in C_{\rm u}(\mathbb{R}^{d},Z)$,
 if $u_{0}-\tilde{u}_{0} \in C_{0}(\mathbb{R}^{d},Z)$, 
 then ${\sf N}(t,t_0,u_{0})-{\sf N}(t,t_0,\tilde{u}_{0})\in C_{0}(\mathbb{R}^{d},Z)$
 for $0\le t <\min\{T^{*}(u_{0}),T^{*}(\tilde{u}_{0})\}$.
\end{lemma}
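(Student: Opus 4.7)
The plan is to work directly with the integral equation \eqref{eq: integral equation} and exploit the crucial fact that the operator $F$ acts pointwise in $x$, i.e., $F(t,u)(x) = F(t,u(x))$. This will let me convert the local Lipschitz bound in $Z$ into a pointwise estimate on the difference $w(t)(x) = \mathsf{N}(t,t_0,u_0)(x) - \mathsf{N}(t,t_0,\tilde{u}_0)(x)$, from which the decay at infinity will follow from Gronwall.

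First I would fix $T < \min\{T^{*}(u_{0}),T^{*}(\tilde{u}_{0})\}$ and let $u(t) = \mathsf{N}(t,t_0,u_0)$, $\tilde{u}(t) = \mathsf{N}(t,t_0,\tilde{u}_0)$. Since both are continuous curves in $C_{\rm u}(\mathbb{R}^{d},Z)$ on $[t_0, t_0+T]$, the quantity $R = \max(\sup_{t\in[t_0,t_0+T]}\|u(t)\|_{\infty,Z}, \sup_{t\in[t_0,t_0+T]}\|\tilde{u}(t)\|_{\infty,Z})$ is finite. Then the local Lipschitz constant $L = L(R, t_0+T)$ is well-defined, and because $F$ acts pointwise, for every $x\in\mathbb{R}^{d}$ and $t'\in[t_0, t_0+T]$,
\begin{equation*}
|F(t', u(t'))(x) - F(t', \tilde{u}(t'))(x)|_{Z} \le L\, |u(t')(x) - \tilde{u}(t')(x)|_{Z}.
\end{equation*}

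Next, subtracting the two integral equations \eqref{eq: integral equation}, one has
\begin{equation*}
w(t)(x) = \bigl(u_{0} - \tilde{u}_{0}\bigr)(x) + \int_{t_{0}}^{t} \bigl[F(t', u(t'))(x) - F(t', \tilde{u}(t'))(x)\bigr] dt',
\end{equation*}
so that
\begin{equation*}
|w(t)(x)|_{Z} \le |u_{0}(x) - \tilde{u}_{0}(x)|_{Z} + L \int_{t_{0}}^{t} |w(t')(x)|_{Z}\, dt'.
\end{equation*}
Applying Gronwall's inequality pointwise in $x$ yields $|w(t)(x)|_{Z} \le |u_{0}(x) - \tilde{u}_{0}(x)|_{Z}\, e^{L(t-t_{0})}$.

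Finally, since $u_0 - \tilde{u}_0 \in C_0(\mathbb{R}^d, Z)$, the right-hand side tends to $0$ as $|x|\to\infty$, hence so does $|w(t)(x)|_{Z}$. Combined with the fact that $w(t) \in C_{\rm u}(\mathbb{R}^{d},Z)$ as a difference of two functions in this space, this gives $w(t) \in C_{0}(\mathbb{R}^{d},Z)$, as required. The arbitrariness of $T<\min\{T^{*}(u_{0}),T^{*}(\tilde{u}_{0})\}$ concludes the proof. The main conceptual step — and really the only non-routine one — is realizing that the pointwise nature of $F$ allows one to perform the Gronwall argument fiberwise at each $x$, avoiding any need to propagate the $C_0$ property through a more global contraction estimate.
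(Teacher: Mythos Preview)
Your proof is correct and follows essentially the same approach as the paper: subtract the two integral equations, use the pointwise action of $F$ to get a pointwise Lipschitz estimate, apply Gronwall fiberwise in $x$, and conclude that the decay of $u_0-\tilde u_0$ at infinity propagates to $u(t)-\tilde u(t)$. If anything, your version is slightly more careful in justifying the Lipschitz constant by first fixing $T$ and bounding both trajectories by $R$.
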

\begin{proof}
Let $u(t) = {\sf N}(t,t_0,u_{0})$ and $\tilde{u}(t)={\sf N}(t,t_0,\tilde{u}_{0})$, for any
$x\in\mathbb{R}^{d}$ we have
\begin{align*}
|u(x,t) -\tilde{u}(x,t)|_{Z} & \le |u_{0}(x) - \tilde{u}_{0}(x)|_{Z} 
+ \int_{0}^{t} |F(t',u(x,t')) - F(t',\tilde{u}(x,t'))|_{Z} dt' \\
& \le |u_{0}(x) - \tilde{u}_{0}(x)|_{Z} 
+ L \int_{0}^{t} |u(x,t') - \tilde{u}(x,t')|_{Z} dt'.
\end{align*}
From Gronwall's lemma, we get
$|u(x,t) -\tilde{u}(x,t)|_{Z} \le e^{Lt}|u_{0}(x) - \tilde{u}_{0}(x)|_{Z} $.
Given $\varepsilon>0$, there exists $r>0$ such that
$|u_{0}(x) - \tilde{u}_{0}(x)|_{Z}<\varepsilon e^{-Lt}$ for $|x|>r$, then
$|u(x,t) -\tilde{u}(x,t)|_{Z} < \varepsilon$, which implies 
$u(t) -\tilde{u}(t)\in C_{0}(\mathbb{R}^{d},Z)$.
\end{proof}
\begin{proposition}
\label{pr: localsolperiodic}
 Let $u_{0},\tilde{u}_{0} \in C_{\rm u}(\mathbb{R}^{d},Z)$,
 such that $u_{0}-\tilde{u}_{0} \in C_{0}(\mathbb{R}^{d},Z)$ and let $u,\tilde{u}$ be 
 the respective solutions of \eqref{eq: mild solution}. For any $0\le t <\min\{T^{*}(u_{0}),T^{*}(\tilde{u}_{0})\}$,
 it is verified $u(t)-\tilde{u}(t)\in C_{0}(\mathbb{R}^{d},Z)$.
\end{proposition}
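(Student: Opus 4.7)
The plan is to exploit the Lie-Trotter construction that the paper uses to build mild solutions of \eqref{eq: mild solution}, combined with the two preservation lemmas just proved. Fix $T$ with $0 \le T < \min\{T^{*}(u_{0}), T^{*}(\tilde u_{0})\}$, so that $u$ and $\tilde u$ are bounded on $[0,T]$ by some common $R$. For a uniform mesh $t_n = nh$ with $h = T/k$ and $k$ large enough that $h \le \mathcal{T}(T,2R)$, define the Lie-Trotter iterates
\begin{align*}
U^{(k)}_{n+1} = {\sf S}(h)\,\mathsf{N}(t_{n+1}, t_n, U^{(k)}_n), \qquad U^{(k)}_0 = u_0,
\end{align*}
and analogously $\tilde U^{(k)}_n$ starting from $\tilde u_0$.

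I would then show by induction on $n$ that $U^{(k)}_n - \tilde U^{(k)}_n \in C_{0}(\mathbb{R}^{d}, Z)$. The base case is the hypothesis $u_0 - \tilde u_0 \in C_0$. For the inductive step, Lemma \ref{le: u-tilde u} applied on $[t_n, t_{n+1}]$ yields
\begin{align*}
\mathsf{N}(t_{n+1}, t_n, U^{(k)}_n) - \mathsf{N}(t_{n+1}, t_n, \tilde U^{(k)}_n) \in C_{0}(\mathbb{R}^{d}, Z),
\end{align*}
and Lemma \ref{le: S(t)C0} shows that applying ${\sf S}(h)$ keeps the difference inside $C_0$. Hence the piecewise interpolants $U^{(k)}(t)$ and $\tilde U^{(k)}(t)$ differ by a $C_0$-valued function at every $t\in[0,T]$.

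To close the argument, I would invoke the convergence of the Lie-Trotter splitting established in \cite{Besteiro2018}: as $k\to\infty$, $U^{(k)}\to u$ and $\tilde U^{(k)}\to \tilde u$ in $C([0,T], C_{\rm u}(\mathbb{R}^{d}, Z))$. Since $C_{0}(\mathbb{R}^{d}, Z)$ is a closed subspace of $C_{\rm u}(\mathbb{R}^{d}, Z)$, taking $k\to\infty$ yields $u(t) - \tilde u(t) \in C_{0}(\mathbb{R}^{d},Z)$ for every $t\in[0,T]$, and arbitrariness of $T$ below $\min\{T^{*}(u_{0}), T^{*}(\tilde u_{0})\}$ gives the proposition. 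The one delicate point is the uniform-in-time convergence of the Lie-Trotter scheme all the way up to $\min\{T^{*}(u_{0}), T^{*}(\tilde u_{0})\}$, which relies on the a priori lower bound $\mathcal{T}(T,R)$ recalled in Section~2 to prevent the $\mathsf{N}$-steps from breaking down; the preservation lemmas themselves enter only at a single inductive step, and the rest is a routine closedness argument in $C_{\rm u}(\mathbb{R}^{d}, Z)$.
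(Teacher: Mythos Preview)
Your proposal is correct and follows essentially the same strategy as the paper's proof: define Lie--Trotter iterates, use Lemmas~\ref{le: u-tilde u} and~\ref{le: S(t)C0} to propagate the condition $U_{h,k}-\tilde U_{h,k}\in C_{0}(\mathbb{R}^{d},Z)$ by induction, then pass to the limit using closedness of $C_{0}$ and convergence of the splitting scheme. The only cosmetic differences are the ordering of the substeps (you apply ${\sf N}$ then ${\sf S}$, the paper applies ${\sf S}$ then ${\sf N}$ with the half-shifted time window $[kh+h/2,kh+h]$) and the fact that you spell out the a~priori bound $h\le\mathcal{T}(T,2R)$ needed to keep the nonlinear flow defined, which the paper leaves implicit.
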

\begin{proof}
 For $t \in [0,\min\{T^{*}(u_{0}),T^{*}(\tilde{u}_{0})\})$, let $n\in\mathbb{N}$, $h=t/n$ and $\{U_{h,k}\}_{0\le j\le n}$,$\{\tilde{U}_{h,k}\}_{0\le j\le n}$ 
 sequences defined in terms of a recurrence, in the following way:
 
 Let $\{U_{h,k}\}_{0\le k\le n},\{V_{h,k}\}_{1\le k\le n}$ be the sequences given by $U_{h,0} = u_{0}$,
\begin{subequations}
\label{eq: L-T}
\begin{align}
V_{h,k+1} & = {\sf S}(h) U_{h,k}, \\
U_{h,k+1} & = \mathsf{N}(kh+h,kh+h/2,V_{h,k+1}), \quad k=0,\dots,n-1.
\end{align}
\end{subequations}

 We claim that $U_{h,k}-\tilde{U}_{h,k}\in C_{0}(\mathbb{R}^{d},Z)$ for $k=0,\dots,n$.
 Clearly, the assertion is true for $k=0$.
 If $U_{h,k-1}-\tilde{U}_{h,k-1}\in C_{0}(\mathbb{R}^{d},Z)$, from Lemma \ref{le: u-tilde u},
 we have ${\sf N}(kh,kh-h/2,V_{h,k-1})-{\sf N}(kh,kh-h/2,\tilde{V}_{h,k-1})\in C_{0}(\mathbb{R}^{d},Z)$. Using lemma \ref{le: S(t)C0}, we can see that
\begin{align*}
 V_{h,k}-\tilde{V}_{h,k} 
= S(h)({\sf N}(kh,kh-h/2,V_{h,k-1})-{\sf N}(kh,kh-h/2,\tilde{V}_{h,k-1}))\in C_{0}(\mathbb{R}^{d},Z).
\end{align*}
As $C_{0}(\mathbb{R}^{d},Z)$ is closed and $U_{h,n}-\tilde{U}_{h,n}\to u(t)-\tilde{u}(t)$,
we obtain the result. 
\end{proof}

\begin{theorem}
 Let $u_{0}\in X_{\Gamma,Z}$, the solution $u$ of the equation
 \eqref{eq: mild solution} verifies $u(t)\in X_{\Gamma,Z}$ for $0\le t <T^{*}(u_{0})$.
Moreover, if $u_{0} = v_{0} + w_{0}$ with $v_{0}\in C_{\rm u}(\mathbb{R}^{d}/\Gamma,Z)$
and $w_{0} \in C_{0}(\mathbb{R}^{d},Z)$, then $u(t)=v(t)+w(t)$, where
$v$ is the solution of \eqref{eq: mild solution} with initial data $v_{0}$ and
$w$ is the solution of
\begin{align*}
 w(t) = {\sf S}(t)w_{0} + \int_{0}^{t} {\sf S}(t-t')\left(F(v(t') + w(t')) - F(v(t')) \right) dt'.
\end{align*}
\end{theorem}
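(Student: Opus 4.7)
The plan is to construct $v$ and $w$ separately using the earlier propositions and then verify that their sum reproduces $u$ on the full interval $[0, T^*(u_0))$. First, write $u_0 = v_0 + w_0$ with $v_0 \in C_{\rm u}(\mathbb{R}^d/\Gamma, Z)$ and $w_0 \in C_0(\mathbb{R}^d, Z)$, and let $v$ be the unique mild solution of \eqref{eq: mild solution} with initial data $v_0$, defined on its maximal interval $[0, T^*(v_0))$. By Proposition \ref{pr: globalexistenceperiodic}, $v(t) \in C_{\rm u}(\mathbb{R}^d/\Gamma, Z)$ throughout. Since $u_0 - v_0 = w_0 \in C_0(\mathbb{R}^d, Z)$, Proposition \ref{pr: localsolperiodic} gives $u(t) - v(t) \in C_0(\mathbb{R}^d, Z)$ for $0 \le t < \min\{T^*(u_0), T^*(v_0)\}$. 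Setting $w(t) := u(t) - v(t)$, we obtain $u(t) = v(t) + w(t) \in X_{\Gamma, Z}$ on this common interval.

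The main obstacle is upgrading the decomposition from the common interval $[0, \min\{T^*(u_0), T^*(v_0)\})$ to the full interval $[0, T^*(u_0))$; equivalently, showing $v$ does not blow up before $u$ does. On the common interval, $u(t)$ lies in $X_{\Gamma,Z}$ with $\Gamma$-periodic part $v(t)$ and decaying part $w(t)$, so by Lemma \ref{le: proyector} we have $v(t) = P u(t)$ and the projection estimate from its proof yields
\[
\|v(t)\|_{C_{\rm u}(\mathbb{R}^d, Z)} \le \|u(t)\|_{C_{\rm u}(\mathbb{R}^d, Z)}.
\]
If we had $T^*(v_0) < T^*(u_0)$, then $\|u(\cdot)\|_{C_{\rm u}(\mathbb{R}^d, Z)}$ would be bounded on the closed interval $[0, T^*(v_0)]$, forcing $\|v(\cdot)\|_{C_{\rm u}(\mathbb{R}^d, Z)}$ to be bounded as well, which contradicts the blowup alternative in Theorem \ref{th: local existence}. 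Therefore $T^*(v_0) \ge T^*(u_0)$, and the decomposition $u(t) = v(t) + w(t)$ with $v(t) \in C_{\rm u}(\mathbb{R}^d/\Gamma, Z)$ and $w(t) \in C_0(\mathbb{R}^d, Z)$ is valid on all of $[0, T^*(u_0))$.

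Finally, the integral equation for $w$ is a direct consequence of subtracting the mild formulations of $u$ and $v$; linearity of ${\sf S}(t)$ yields
\begin{align*}
w(t) &= u(t) - v(t) \\
&= {\sf S}(t) w_0 + \int_0^t {\sf S}(t - t')\bigl(F(t', v(t') + w(t')) - F(t', v(t'))\bigr)\, dt',
\end{align*}
which is the equation claimed in the statement. Apart from the blowup comparison in the previous paragraph, every step is a routine combination of the results already proved in this section.
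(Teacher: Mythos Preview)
Your proof is correct and follows essentially the same approach as the paper: obtain the decomposition on the common interval via Propositions \ref{pr: globalexistenceperiodic} and \ref{pr: localsolperiodic}, then use the projector bound from Lemma \ref{le: proyector} together with the blowup alternative to rule out $T^*(v_0) < T^*(u_0)$. Your argument is in fact more streamlined than the paper's, which introduces an auxiliary set $\mathcal{T} = \{t \in [0,T] : u(t) \notin X_{\Gamma,Z}\}$ and analyzes its infimum before reaching the same contradiction.
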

\begin{proof}
As $u_{0}\in X_{\Gamma,Z} \subset C_{\rm u}(\mathbb{R}^{d},Z)$, by theorem \ref{th: local existence} we have that $u(t) \in C_{\rm u}(\mathbb{R}^{d},Z)$ with maximal time of existence $T^*(u_0)$.
We observe that as $v_0 \in C_{\rm u}(\mathbb{R}^{d}/\Gamma,Z)$ then by proposition \ref{pr: globalexistenceperiodic} we know that $v(t) \in C_{\rm u}(\mathbb{R}^{d}/\Gamma,Z)$ with maximal time of existence $T^*(v_0)$.
We define $w(t)=u(t)-v(t)$.
By hypothesis, we have that $w_{0}=w(0)=u(0)-v(0)=u_{0}-v_{0} \in C_{0}(\mathbb{R}^{d},Z) $ therefore, by proposition \ref{pr: localsolperiodic} we know that $w(t) \in C_{0}(\mathbb{R}^{d},Z)$. Then, we obtain that $u(t)=v(t)+w(t) \in X_{\Gamma,Z}$, where $v(t) \in C_{\rm u}(\mathbb{R}^{d}/\Gamma,Z)$
and $w(t) \in C_{0}(\mathbb{R}^{d},Z)$ in the interval $[0,T_{min})$ donde $T_{min}= \min \{T(u_0),T(v_0)\}$. If it were that $T^*(v_0) \geq T^*(u_0)$ then we have the result.

Suppose that $T^*(v_0)< T^*(u_0)$.

Let $T \in (0,T^*(u_0))$, $M=\max_{0\leq t \leq T} \norm{u(t)}$. We define $\Tau=\{ t \in [0,T]: u(t) \notin X_{\Gamma,Z} \}$, that is, the times for which we have that $u(t)$ is not a direct sum. Suppose that $\Tau \neq \varnothing$.
Then there exists $t_1 = \inf{\Tau}. $ 
We analize if the infimum can be equal to zero or greater than zero.

The case in which $t_1=0$ is not possible because we have already seen that $u(t) \in X_{\Gamma,Z}$, in the interval $[0,T^*(v_0))$.
In the same way, if $t_1>0$ and additionally $t_1 < T^*(v_0)$ we have that $u(t) \in X_{\Gamma,Z}$. We analize the remaining case, $t_1>0$ and $T>t_1 > T^*(v_0)$.

We observe that, by theorem \ref{th: local existence} we obtain that $\lim_{t \to T^*(v_0)}\norm{v(t)}=+ \infty$ but on the other hand, by lemma \ref{le: proyector} we have that $\norm{v(t)} \leq \norm{P} \norm{u(t)} \leq \norm{P} M$ that is, the norm $v(t)$ is bounded for $t \in [0,T^*(v_0)) \subset [0,T]$, which is a contradiction. 

 So we finally have that $u(t)\in X_{\Gamma,Z}$ for $t \in [0,T^*(u_0))$.


\end{proof}

  \section*{Acknowledgement}
This work was partially supported by CONICET--Argentina, PIP 11220130100006.

\end{document}